\begin{document}
\centerline{}

\centerline{\Large{\bf Cotton Solitons on Almost Kenmotsu 3-$h$-Manifolds}}
\vspace{0.2cm}

\centerline{}

\newcommand{\mvec}[1]{\mbox{\bfseries\itshape #1}}

\centerline{\large{\textbf{Dibakar Dey and Pradip Majhi}}}
\vspace{0.1cm}
\centerline{Department of Pure Mathematics} 
\centerline{ University of Calcutta,
35, Ballygunge Circular Road}
\centerline{Kolkata-700019, West Bengal, India }
\centerline{E-mail: deydibakar3@gmail.com and Pradip Majhi}



\newtheorem{theorem}{\quad Theorem}[section]

\newtheorem{definition}[theorem]{\quad Definition}

\newtheorem{proposition}[theorem]{\quad Proposition}

\newtheorem{question}[theorem]{\quad Question}

\newtheorem{remark}[theorem]{\quad Remark}

\newtheorem{corollary}[theorem]{\quad Corollary}

\newtheorem{note}[theorem]{\quad Note}

\newtheorem{lemma}[theorem]{\quad Lemma}
\newtheorem{example}[theorem]{\quad Example}
\newtheorem{notation}[theorem]{\quad Notation}
\numberwithin{equation}{section}
\newcommand{\be}{\begin{equation}}
\newcommand{\ee}{\end{equation}}
\newcommand{\bea}{\begin{eqnarray}}
\newcommand{\eea}{\end{eqnarray}}

\vspace{0.3 cm}
\textbf{Abstract:} In this paper, we consider the notion of Cotton soliton within the framework of almost Kenmotsu 3-$h$-manifolds. First we consider that the potential vector field is pointwise collinear with the Reeb vector field and prove a non-existence of such Cotton soliton. Next we assume that the potential vector field is orthogonal to the Reeb vector field. It is proved that such a Cotton soliton on a non-Kenmotsu almost Kenmotsu 3-$h$-manifold such that the Reeb vector field is an eigen vector of the Ricci operator is steady and the manifold is locally isometric to $\mathbb{H}^2(-4) \times \mathbb{R}$.\\

\textbf{Mathematics Subject Classification 2010:} Primary 53D15; Secondary 35Q51.\\

\textbf{Keywords:} Almost Kenmotsu manifolds, Cotton solitons, Non-unimodular Lie group.\\

\section{Introduction}
An almost contact metric manifold is an odd dimensional differentiable manifold $M^{2n+1}$ together with a structure $(\varphi,\xi,\eta,g)$ satisfying (\cite{bl}, \cite{bll})
\be
\varphi^{2}X = - X + \eta(X)\xi,\;\; \eta(\xi)=1, \label{1.1}
\ee
\be
 g(\varphi X,\varphi Y) = g(X,Y) - \eta(X)\eta(Y) \label{1.2}
\ee
for all vector fields $X$, $Y$ on $M^{2n+1}$, where $g$ is the Riemannian metric, $\varphi$ is a $(1,1)$-tensor field, $\xi$ is a unit vector field called the Reeb vector field and $\eta$ is a 1-form defined by $\eta(X) = g(X,\xi)$. Here also $\phi\xi=0$ and $\eta\circ\phi=0$; both can be derived from \eqref{1.1} easily. The fundamental 2-form $\Phi$ on an almost contact metric manifold is defined by $\Phi(X,Y)=g(X,\varphi Y)$ for all vector fields $X$, $Y$ on $M^{2n+1}$. The condition for an almost contact metric manifold being normal is equivalent to vanishing of the $(1,2)$-type torsion tensor $N_{\varphi}$, defined by $N_{\varphi}=[\varphi,\varphi]+2d\eta\otimes\xi$, where $[\varphi,\varphi]$ is the Nijenhuis tensor of $\varphi$ \cite{bl}. Almost contact metric manifold such that $\eta$ is closed and $d\Phi=2\eta\wedge\Phi$  are called almost Kenmotsu manifolds (see \cite{dp},\cite{dp2}). Obviously, a normal almost Kenmotsu manifold is a Kenmotsu manifold. Also Kenmotsu manifolds can be characterized by $(\nabla_{X}\varphi)Y = g(\varphi X,Y)\xi-\eta(Y)\varphi X$, for any vector fields $X$, $Y$ on $M^{2n+1}$.\\

The $(0,3)$-Cotton tensor of a 3-dimensional Riemannian manifold $(M^3,g)$ is defined as
\bea
\nonumber C(X,Y,Z) &=& (\nabla_X S)(Y,Z) - (\nabla_Y S)(X,Z) \\ && - \frac{1}{4}[(X(r)g(Y,Z) - Y(r)g(X,Z)], \label{1.3}
\eea
where $S$ is the Ricci tensor and $r$ is the scalar curvature of $M^3$. The Cotton tensor is skew-symmetric in first two indices and totally trace free. It is well known that for $n \geq 4$, an $n$-dimensional Riemannian manifold is conformally flat if the Weyl tensor vanishes. For $n = 3$, the Weyl tensor always vanishes but the Cotton tensor does not vanish in general.\\

In 2008, Kicisel, Sario\u{g}lu and Tekin \cite{tekin} introduced the notion of Cotton flow as a analogy of the Ricci flow. The Cotton flow is based on the conformally invariant Cotton tensor and defined exclusively foe 3-simension as
\bea
\nonumber \frac{\partial g}{\partial t} = C,
\eea
where $C$ is the $(0,2)$-Cotton tensor of $g$. From the Cotton flow, they defined the notion of Cotton soliton as follows:
\begin{definition}
A Cotton soliton is a metric $g$ defined on 3-dimensional smooth manifold $M^3$ such that the following equation
\bea
(\mathcal{L}_V g)(X,Y) + C(X,Y) - \sigma g(X,Y) = 0, \label{1.4}
\eea
holds for a constant $\sigma$ and a vector field $V$, called the potential vector field, where $\mathcal{L}_V$ denotes the Lie derivative along $V$ and $C$ is the $(0,2)$-Cotton tensor defined by
\bea
C_{ij} = \frac{1}{2\sqrt{g}}C_{nmi}\epsilon^{nml}g_{lj} \label{1.5}
\eea
in a local frame of $M^3$, where $g = \operatorname{det}(g_{ij})$, $C_{ijk}$ is the $(0,3)$-Cotton tensor and $\epsilon$ is a tensor density.
\end{definition}
In an orthonormal frame, $\epsilon^{123} = 1$. Also exchange of any two indices will give rise to minus sign and it will be zero if there has two same indices. For example, $\epsilon^{231} = - \epsilon^{213}$ and $\epsilon^{112} = \epsilon^{122} = \epsilon^{223} = 0$. Cotton solitons are fixed points of the Cotton flow upto diffeomorphisms and rescaling. The Cotton soliton is said to be shrinking, steady or expanding according as $\sigma$ is positive, zero or negative respectively. As far as we know, the Cotton soliton was studied by Chen \cite{chen} on certain almost contact metric manifold, precisely on almost coK\"{a}hler 3-manifolds. Motivated by the study of Chen \cite{chen}, we consider the notion of Cotton soliton on an almost Kenmotsu 3-$h$-manifold and prove some related results.

\section{Almost Kenmotsu 3-$h$-Manifolds}

Let $(M^3,\varphi,\xi,\eta,g)$ be a 3-dimensional almost Kenmotsu manifold. We denote by $l = R(\cdot,\xi)\xi$, $h = \frac{1}{2}\mathcal{L}_\xi \varphi$ and $h' = h \circ \varphi$ on $M^3$, where $R$ is the Riemannian curvature tensor. The tensor fields $l$ and $h$ are symmetric operators and satisfy the following relations (\cite{dp}, \cite{dp2}):
 \bea
h\xi = 0,\;l\xi = 0,\;tr(h) = 0,\;tr(h\varphi) = 0,\;h\varphi+\varphi h = 0, \label{2.1}
 \eea
\bea
 \nabla_{X}\xi = X - \eta(X)\xi - \varphi hX(\Rightarrow \nabla_{\xi}\xi=0), \label{2.2}
\eea
\bea
\nabla_\xi h = - \varphi - 2h - \varphi h^2 - \varphi l. \label{2.3}
\eea
\begin{definition} $\cite{ww}$
A 3-dimensional almost Kenmotsu manifold is called an almost Kenmotsu 3-$h$-manifold if it satisfies $\nabla_\xi h = 0$.
\end{definition}

Let $\mathcal{U}_1$ be the maximal open subset of a 3-dimensional almost Kenmotsu manifold $M^3$ such that $h \neq 0$ and $\mathcal{U}_2$ be the maximal open subset on which $h = 0$. Then $\mathcal{U}_1 \cup \mathcal{U}_2$ is an open and dense subset of $M^3$. Then $\mathcal{U}_1$ is non-empty and there is a local orthonormal basis $\{e_1 = \xi, e_2 = e, e_3 = \varphi e\}$ on $\mathcal{U}_1$ such that $he = \lambda e$ and $h\varphi e = - \lambda \varphi e$ for some positive function $\lambda$. Since $\nabla_\xi h = 0$ for an almost Kenmotsu 3-$h$-manifold, then using Lemma 6 of \cite{cho} and \eqref{2.3}, a direct calculation gives $\xi(\lambda) = a = 0$. Therefore, Lemma 6 of \cite{cho} can be rewritten for an almost Kenmotsu 3-$h$-manifold as
\begin{lemma} \label{lem2.2}
On $\mathcal{U}_1$, the coefficients of the Riemannian connection $\nabla$ of an almost Kenmotsu 3-$h$-manifold with respect to a local orthonormal basis $\{\xi,e,\varphi e\}$ is given by
$$\nabla_\xi \xi = 0,\;\; \nabla_\xi e = 0,\;\; \nabla_\xi \phi e = 0,$$
$$\nabla_e \xi = e - \lambda \varphi e,\;\; \nabla_e e = - \xi - b\varphi e,\;\; \nabla_e \varphi e = \lambda \xi + be,$$
$$\nabla_{\varphi e} \xi = - \lambda e + \varphi e,\;\; \nabla_{\varphi e} e = \lambda \xi + c\varphi e,\;\; \nabla_{\varphi e} \varphi e = - \xi - ce,$$
where $b$ and $c$ are smooth functions.
\end{lemma}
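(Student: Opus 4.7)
The plan is to specialize Lemma 6 of \cite{cho} by exploiting that on an almost Kenmotsu 3-$h$-manifold the quantity $\xi(\lambda)$ (denoted $a$ in that source) vanishes, as recorded in the paragraph preceding the statement. I would organize the computation in three passes: first pin down the three $\nabla_X \xi$ via the universal formula \eqref{2.2}; next use $\nabla_\xi h = 0$ together with $\xi(\lambda)=0$ to annihilate $\nabla_\xi e$ and $\nabla_\xi \varphi e$; and finally extract the remaining four derivatives purely from metric compatibility on the orthonormal triad, the two genuinely undetermined scalars surviving as the functions $b$ and $c$.

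For the first pass, using $he = \lambda e$, $h\varphi e = -\lambda \varphi e$, and $\varphi^2 e = -e$, substitution into \eqref{2.2} gives
\[
\nabla_\xi \xi = 0,\qquad \nabla_e \xi = e - \lambda \varphi e,\qquad \nabla_{\varphi e}\xi = -\lambda e + \varphi e
\]
directly. For the second pass, metric compatibility on the unit vector $e$ gives $g(\nabla_\xi e, e)=0$, while $g(\nabla_\xi e,\xi) = -g(e,\nabla_\xi \xi) = 0$, so $\nabla_\xi e = \alpha\, \varphi e$ for some function $\alpha$. Differentiating $he = \lambda e$ along $\xi$ and using $\nabla_\xi h = 0$ and $\xi(\lambda) = 0$ yields $h(\nabla_\xi e) = \lambda \,\nabla_\xi e$; but $h(\alpha\,\varphi e) = -\lambda \alpha\,\varphi e$, so $\alpha = 0$ because $\lambda > 0$ on $\mathcal{U}_1$. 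The symmetric argument applied to $\varphi e$ gives $\nabla_\xi \varphi e = 0$.

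For the third pass, expand each of the four remaining covariant derivatives in the basis $\{\xi,e,\varphi e\}$. The $\xi$-components are forced by $g(\nabla_X Y, \xi) = -g(Y,\nabla_X \xi)$ combined with the first pass, producing $-1,\;\lambda,\;\lambda,\;-1$ for $\nabla_e e,\;\nabla_e\varphi e,\;\nabla_{\varphi e}e,\;\nabla_{\varphi e}\varphi e$ respectively. The diagonal components (the coefficient of $Y$ itself in $\nabla_X Y$ when $Y$ is unit) all vanish. This leaves one free off-diagonal coefficient in each of $\nabla_e e$ and $\nabla_{\varphi e}\varphi e$, which I label $-b$ and $-c$; the compatibility relation $g(\nabla_X Y,Z) = -g(Y,\nabla_X Z)$ then propagates these with sign reversed to $\nabla_e \varphi e$ and $\nabla_{\varphi e} e$, producing the stated expressions.

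I expect no genuine obstacle: once $\xi(\lambda) = 0$ has been established upstream, the entire argument is algebraic bookkeeping. The only subtle point is that ruling out $\alpha$ in the second pass genuinely uses $\lambda > 0$, which is why the statement is confined to the open set $\mathcal{U}_1$ where $h \neq 0$.
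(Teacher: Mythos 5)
Your argument is correct, but it is organized quite differently from what the paper actually does: the paper offers no independent derivation of Lemma \ref{lem2.2} at all, instead quoting Lemma 6 of \cite{cho} (which already contains all nine covariant derivatives for a general almost Kenmotsu 3-manifold, with an extra function $a$ appearing in $\nabla_\xi e = a\varphi e$ and $\nabla_\xi \varphi e = -ae$) and then using $\nabla_\xi h = 0$ together with \eqref{2.3} to conclude $a = \xi(\lambda) = 0$. You rebuild the whole table from scratch: \eqref{2.2} for the $\nabla_X\xi$ row, metric compatibility for the $\xi$- and diagonal components of the remaining derivatives, and a clean differentiation of $he = \lambda e$ along $\xi$ (using $\nabla_\xi h = 0$ and $\xi(\lambda)=0$) to kill the coefficient $\alpha$ via $-\lambda\alpha = \lambda\alpha$ and $\lambda > 0$. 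Your route is self-contained and makes transparent exactly where the $h$-manifold hypothesis enters (only in forcing $\nabla_\xi e = \nabla_\xi\varphi e = 0$), whereas the paper's route is shorter but opaque, deferring the entire computation to the cited source. One mild caveat: you take $\xi(\lambda)=0$ as established ``upstream,'' and in the paper that fact is itself obtained via Lemma 6 of \cite{cho}; if you want your derivation to be fully independent of that citation, note that $\xi(\lambda) = \xi\bigl(g(he,e)\bigr) = g((\nabla_\xi h)e, e) + 2\lambda\, g(\nabla_\xi e, e) = 0$ follows already from $\nabla_\xi h = 0$ and the unit length of $e$, so no circularity arises.
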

From Lemma \ref{lem2.2}, the Lie brackets can be calculated as follows:
\bea
[e,\xi] = e - \lambda \varphi e,\;\; [e,\varphi e] = be - c\varphi e\;\; \mathrm{and}\;\; [\varphi e,\xi] = - \lambda e + \varphi e. \label{2.4}
\eea
From Lemma 3.2 of \cite{wang2}, the Ricci operator $Q$ of an almost Kenmotsu 3-$h$-manifold can be obtained as
\begin{lemma} \label{lem2.3}
On $\mathcal{U}_1$, the Ricci operator of an almost Kenmotsu 3-$h$-manifold with respect to a local orthonormal basis $\{\xi,e,\varphi e\}$ is given by
$$Q\xi = -2(\lambda^2 + 1)\xi - [\varphi e(\lambda) + 2\lambda b]e - [e(\lambda) + 2\lambda c]\varphi e,$$
$$Qe = - [\varphi e(\lambda) + 2\lambda b]\xi - fe + 2\lambda \varphi e,$$
$$Q\varphi e = - [e(\lambda) + 2\lambda c]\xi + 2\lambda e - f\varphi e,$$
where $f = e(c) + \varphi e(b) + b^2 + c^2 + 2$.
\end{lemma}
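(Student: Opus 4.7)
The plan is to derive Lemma~\ref{lem2.3} by a direct computation of the Riemann curvature tensor from the connection data already supplied in Lemma~\ref{lem2.2}. Because every covariant derivative $\nabla_{e_i}e_j$ is explicit in the frame $\{\xi,e,\varphi e\}$, one can evaluate
\begin{equation*}
R(X,Y)Z = \nabla_X\nabla_Y Z - \nabla_Y\nabla_X Z - \nabla_{[X,Y]}Z
\end{equation*}
on triples of basis vectors, reading off the brackets from \eqref{2.4}. Exploiting the symmetries of $R$, it suffices to compute the independent pieces $R(\xi,e)\xi$, $R(\xi,\varphi e)\xi$, $R(\xi,e)\varphi e$, $R(e,\varphi e)\xi$, $R(e,\varphi e)e$, and $R(e,\varphi e)\varphi e$. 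A large number of terms vanish immediately from $\nabla_\xi \xi = \nabla_\xi e = \nabla_\xi \varphi e = 0$ and $\xi(\lambda)=0$ (both recorded just before Lemma~\ref{lem2.2}), so the surviving expressions involve only $e(\lambda)$, $\varphi e(\lambda)$, $e(c)$, $\varphi e(b)$ and quadratic monomials in $\lambda$, $b$, $c$.

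The Ricci operator is then obtained from the contraction $g(QX,Y) = \sum_i g(R(e_i,X)Y,e_i)$ over the orthonormal basis. The diagonal entry $g(Q\xi,\xi)$ collects the two sectional quantities $g(R(e,\xi)\xi,e)$ and $g(R(\varphi e,\xi)\xi,\varphi e)$, each of which works out to $-(1+\lambda^2)$ from the computation of $R(\xi,e)\xi$, producing the stated coefficient $-2(\lambda^2+1)$. The off-diagonal entries $g(Q\xi,e)$ and $g(Q\xi,\varphi e)$ pair the frame derivatives $\varphi e(\lambda)$ and $e(\lambda)$ with the $\lambda b$ and $\lambda c$ corrections that enter through $\nabla_{[e,\varphi e]}$. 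Finally, the pair $R(e,\varphi e)e$ and $R(e,\varphi e)\varphi e$, combined with the contribution from $R(\xi,\cdot)\cdot$, packages $e(c) + \varphi e(b) + b^2 + c^2 + 2$ into the single function $f$, while a residual cross-term yields $g(Qe,\varphi e) = 2\lambda$.

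The main obstacle is bookkeeping rather than any deep idea, but one genuine consistency check must be carried out: the Ricci operator is self-adjoint, so the expressions found for $g(Q\xi,e)$ and $g(Qe,\xi)$, as well as those for $g(Q\xi,\varphi e)$ and $g(Q\varphi e,\xi)$, must agree after simplification. Matching them forces the use of the Jacobi identity on $\{\xi,e,\varphi e\}$ (equivalently, the integrability relations hidden in \eqref{2.4}), which determines $\xi(b)$ and $\xi(c)$ in terms of the cross-derivatives of $\lambda$. Once these compatibility relations are inserted, all curvature terms collapse into the three stated formulas for $Q\xi$, $Qe$ and $Q\varphi e$.
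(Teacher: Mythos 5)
Your derivation is correct, but it is worth pointing out that the paper does not actually prove this lemma at all: it simply imports Lemma 3.2 of \cite{wang2} (stated there for general almost Kenmotsu 3-manifolds) and specializes it using the fact, established just before Lemma \ref{lem2.2}, that $\xi(\lambda)=0$ on an almost Kenmotsu 3-$h$-manifold. Your route --- computing $R$ from the connection coefficients of Lemma \ref{lem2.2} and contracting $S(X,Y)=\sum_i g(R(e_i,X)Y,e_i)$ --- is the standard self-contained derivation and checks out: for instance $R(e,\xi)\xi=-(1+\lambda^2)e+2\lambda\varphi e$ and $R(\varphi e,\xi)\xi=2\lambda e-(1+\lambda^2)\varphi e$ give $S(\xi,\xi)=-2(\lambda^2+1)$, and $g(R(\varphi e,e)\varphi e,e)=-\lambda^2+1+e(c)+\varphi e(b)+b^2+c^2$ combines with $g(R(\xi,e)e,\xi)=-(1+\lambda^2)$ to give $S(e,e)=-f$. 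You also correctly identified the one genuinely non-obvious ingredient: the expressions for $S(\xi,e)$ and $S(\xi,\varphi e)$ obtained by contracting in the first slot contain $\xi(c)$ and $\xi(b)$, and only collapse to $-[\varphi e(\lambda)+2\lambda b]$ and $-[e(\lambda)+2\lambda c]$ after substituting the Jacobi-identity relations $\xi(b)=e(\lambda)+\lambda c-b$ and $\xi(c)=\varphi e(\lambda)+\lambda b-c$ coming from \eqref{2.4} (alternatively, one can compute $S(e,\xi)$ and $S(\varphi e,\xi)$ instead, which yield the stated forms directly and make the Jacobi step a genuine symmetry check). What your approach buys is independence from the external reference; what the paper's citation buys is brevity and consistency with the sign and frame conventions already fixed in \cite{wang2} and \cite{cho}.
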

The scalar curvature $r$ of an almost Kenmotsu 3-$h$-manifold is given by
\bea
r = g(Qe_i,e_i) = - 2(\lambda^2 + 1) - 2f. \label{2.5}
\eea
Using Lemma \ref{lem2.3}, we obtain
\bea \label{2.6}
\begin{cases}
  S(\xi,\xi) = -2(\lambda^2 + 1),\;  S(\xi,e) = - [\varphi e(\lambda) + 2\lambda b], \\
  S(\xi,\varphi e) = - [e(\lambda) + 2\lambda c],\;  S(e,e) = -f, \\
   S(e,\varphi e) = 2\lambda, \; S(\varphi e,\varphi e) = - f. 
 \end{cases}
\eea
It is well known that an almost Kenmotsu 3-manifold is Kenmotsu if and only if $h = 0$. Thus a Kenmotsu metric always admits a almost Kenmotsu 3-$h$-metric structure. We now give an example of a non-Kenmotsu almost Kenmotsu 3-$h$-manifold.
\begin{example} $\cite{wa2}$
Let $M^3$ be a 3-dimensional non-unimodular Lie group with a left invariant local orthonormal frame $\{e_1,e_2,e_3\}$ satisfying
$$[e_1,e_2] = \alpha e_2 + \beta e_3,\; [e_2,e_3] = 0 \; \mathrm{and} \; [e_1,e_3] = \beta e_2 + (2 - \alpha)e_3$$
for $\alpha, \beta \in \mathbb{R}$. If either $\alpha \neq 1$ or $\beta \neq 0$, then $M^3$ admits a non-Kenmotsu almost Kenmotsu 3-$h$-metric struicture. 
\end{example}

\section{Cotton Soliton}
In this section, we consider the notion of Cotton soliton within the framework of almost Kenmotsu 3-$h$-manifolds. To study the notion of Cotton soliton, we need to compute the components of the $(0,2)$-Cotton tensor. In this regard, we prove the following Lemma:
\begin{lemma} \label{lem3.1}
The components of the $(0,2)$-Cotton tensor $C$ with respect to an orthonormal frame $\{\xi,e,\varphi e\}$ of a non-Kenmotsu almost Kenmotsu 3-$h$-manifold $M^3$ can be expressed as follows:
\bea
\nonumber  C_{11} = C(\xi,\xi) &=& b[\varphi e(\lambda) + 2\lambda b] - c[e(\lambda) + 2\lambda c] \\ && - e(e(\lambda) + 2\lambda c) + \varphi e(\varphi e(\lambda) + 2\lambda b), \label{3.1}
\eea
\bea
\nonumber  C_{12} = C(\xi,e) &=& 2[e(\lambda) -3\lambda \varphi e(\lambda) + 2\lambda c - 2\lambda^2 b] \\ && + \xi(e(\lambda) + 2\lambda c) - \frac{1}{4}\varphi e(r), \label{3.2}
\eea
\bea
\nonumber  C_{13} = C(\xi,\varphi e) &=& -2[\varphi e(\lambda) -3\lambda e(\lambda) + 2\lambda b - 2\lambda^2 c] \\ && -\xi(\varphi e(\lambda) + 2\lambda b) + \frac{1}{4}e(r), \label{3.3}
\eea
\bea
  C_{22} = C(e,e) = 2\lambda^3 - f\lambda +c[e(\lambda) + 2\lambda c]  - \varphi e(\varphi e(\lambda) + 2\lambda b), \label{3.4}
\eea
\be
C_{23} = C(e,\varphi e) = - \xi(f) - f + 2 + e(\varphi e(\lambda) + 2\lambda b) + b[e(\lambda) + 2\lambda c] - \frac{1}{4}\xi(r), \label{3.5}
\ee
\be
C_{33} = C(\varphi e,\varphi e) = - 2\lambda^3 + f\lambda - b[\varphi e(\lambda) + 2\lambda b] + e(e(\lambda) + 2\lambda c). \label{3.6}
\ee
\end{lemma}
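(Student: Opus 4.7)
The plan is to first turn formula \eqref{1.5} into an explicit expression in the orthonormal frame $\{e_1,e_2,e_3\}=\{\xi,e,\varphi e\}$, and then expand each resulting $(0,3)$-component through \eqref{1.3}. Since the frame is orthonormal, $\sqrt{g}=1$ and $g_{lj}=\delta_{lj}$, so \eqref{1.5} becomes $C_{ij}=\tfrac12\epsilon^{nmj}C_{nmi}$. For each fixed $j$ only two terms survive in the sum, and combining them with the skew-symmetry of the $(0,3)$-Cotton tensor in its first two slots collapses the right-hand side to a single entry. Working this out gives
\begin{align*}
C(\xi,\xi)&=C(e,\varphi e,\xi),& C(\xi,e)&=C(\varphi e,\xi,\xi),& C(\xi,\varphi e)&=C(\xi,e,\xi),\\
C(e,e)&=C(\varphi e,\xi,e),& C(e,\varphi e)&=C(\xi,e,e),& C(\varphi e,\varphi e)&=C(\xi,e,\varphi e),
\end{align*}
so only these six triples of \eqref{1.3} need to be evaluated.

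For each such triple I would expand $(\nabla_X S)(Y,Z)=X(S(Y,Z))-S(\nabla_X Y,Z)-S(Y,\nabla_X Z)$ using the connection coefficients of Lemma \ref{lem2.2}, the Ricci entries in \eqref{2.6}, and the identity $\xi(\lambda)=0$ recorded before Lemma \ref{lem2.2}. The Reeb direction collapses dramatically: $\nabla_\xi\xi=\nabla_\xi e=\nabla_\xi\varphi e=0$ forces $(\nabla_\xi S)(U,V)=\xi(S(U,V))$, producing at once the $\xi(e(\lambda)+2\lambda c)$ term in \eqref{3.2}, its mirror $-\xi(\varphi e(\lambda)+2\lambda b)$ in \eqref{3.3}, and the $-\xi(f)$ contribution in \eqref{3.5}. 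The gradient of $r$ is left in the symbolic form $\xi(r),e(r),\varphi e(r)$ throughout.

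The bulk of the work concentrates on $C(\xi,e)$ and $C(\xi,\varphi e)$, where the terms $(\nabla_{\varphi e}S)(\xi,\xi)$ and $(\nabla_e S)(\xi,\xi)$ appear. Writing $\nabla_e\xi=e-\lambda\varphi e$ and $\nabla_{\varphi e}\xi=-\lambda e+\varphi e$ from Lemma \ref{lem2.2} and pairing with $S(\xi,e)=-[\varphi e(\lambda)+2\lambda b]$, $S(\xi,\varphi e)=-[e(\lambda)+2\lambda c]$, $S(\xi,\xi)=-2(\lambda^2+1)$ from \eqref{2.6}, the brackets $e(\lambda)-3\lambda\varphi e(\lambda)+2\lambda c-2\lambda^2 b$ and its twin in \eqref{3.2}--\eqref{3.3} emerge after collecting $\lambda^2$-contributions. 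Formulas \eqref{3.1}, \eqref{3.4}--\eqref{3.6} follow by the same routine but are slightly shorter, being simplified by the equality $S(e,e)=S(\varphi e,\varphi e)=-f$, which makes several terms telescope. The main obstacle is purely clerical: one has to combine the $\nabla_X\xi$-cross-terms with the off-diagonal Ricci entries (which themselves already carry $2\lambda b$ and $2\lambda c$) so that the signs and coefficients match \eqref{3.1}--\eqref{3.6} on the nose. A useful consistency check along the way is that the resulting expressions satisfy $C_{11}+C_{22}+C_{33}=0$, reflecting the total trace-freeness of the $(0,3)$-Cotton tensor.
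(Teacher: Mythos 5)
Your proposal is correct and follows essentially the same route as the paper: reduce \eqref{1.5} in the orthonormal frame to $C_{ij}=\tfrac12 C_{nmi}\epsilon^{nmj}$, identify the same six surviving $(0,3)$-components, and expand each via \eqref{1.3} using Lemma \ref{lem2.2}, the Ricci entries \eqref{2.6} and $\xi(\lambda)=0$. The trace-freeness check $C_{11}+C_{22}+C_{33}=0$ is a sensible addition not present in the paper, but otherwise the two arguments coincide.
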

\begin{proof}
The components of the metric tensor $g$ with respect to an orthonormal frame $\{\xi,e,\varphi e\}$ of a non-Kenmotsu almost Kenmotsu 3-$h$-manifold $M^3$ is given by
\bea
\nonumber (g_{ij}) = \left( \begin{array}{ccc} 1 & 0 & 0 \\ 0 & 1 & 0 \\ 0 & 0 & 1 \end{array} \right)
\eea
and hence $\operatorname{det}(g_{ij}) = 1$. Therefore, equation \eqref{1.5} reduces to
\bea
\nonumber C_{ij} = \frac{1}{2}C_{nmi}\epsilon^{nmj},\;\; i,j = 1,2,3,
\eea
where $C_{ijk} = C(e_i,e_j,e_k)$. Also, $C_{ijk} = - C_{jik}$ and $C_{iik} = 0$ for all $i,j,k = 1,2,3$. It can be easily obtained that (see \cite{chen})
\bea
\nonumber C_{11} = C_{231},\; C_{12} = C_{311},\; C_{13} = C_{121},\; C_{22} = C_{312},\; C_{23} = C_{122},\; C_{33} = C_{123}.
\eea
Making use of \eqref{1.3}, we get the followings:
\bea
 C_{11} = C_{231} = C(e,\varphi e,\xi) = (\nabla_e S)(\varphi e,\xi) - (\nabla_{\varphi e} S)(e,\xi), \label{3.7}
\eea
\bea
C_{12} = C(\varphi e,\xi,\xi) = (\nabla_{\varphi e} S)(\xi,\xi) - (\nabla_\xi S)(\varphi e,\xi) - \frac{1}{4}\varphi e(r), \label{3.8}
\eea
\bea
C_{13} = C(\xi,e,\xi) = (\nabla_\xi S)(e,\xi) - (\nabla_e S)(\xi,\xi) + \frac{1}{4}e(r), \label{3.9}
\eea
\bea
C_{22} = C(\varphi e,\xi,e) = (\nabla_{\varphi e} S)(\xi,e) - (\nabla_\xi S)(\varphi e,e), \label{3.10}
\eea
\bea
C_{23} = C(\xi,e,e) = (\nabla_\xi S)(e,e) - (\nabla_e S)(\xi,e) - \frac{1}{4}\xi(r), \label{3.11}
\eea
\bea
 C_{33} = C(\xi,e,\varphi e) = (\nabla_\xi S)(e,\varphi e) - (\nabla_e S)(\xi,\varphi e). \label{3.12}
\eea
Using \eqref{2.6}, Lemma \ref{lem2.2} and $\xi(\lambda) = 0$, we now obtain the followings:
\bea
\begin{cases}
(\nabla_e S)(\varphi e,\xi) = 2\lambda^3 - f\lambda + b[\varphi e(\lambda) + 2\lambda b] - e(e(\lambda) + 2\lambda c),\\
(\nabla_{\varphi e} S)(e,\xi) = 2\lambda^3 - f\lambda + c[e(\lambda) + 2\lambda c] -\varphi e(\varphi e(\lambda) + 2\lambda b). \label{3.13}
\end{cases} 
\eea
\bea
\begin{cases}
(\nabla_{\varphi e} S)(\xi,\xi) = 2[e(\lambda) -3\lambda \varphi e(\lambda) + 2\lambda c - 2\lambda^2 b],\\
(\nabla_\xi S)(\varphi e,\xi) = - \xi(e(\lambda) + 2\lambda c). \label{3.14}
\end{cases} 
\eea
\bea
\begin{cases}
(\nabla_\xi S)(e,\xi) = -\xi(\varphi e(\lambda) + 2\lambda b),\\
(\nabla_e S)(\xi,\xi) = 2[\varphi e(\lambda) -3\lambda e(\lambda) + 2\lambda b - 2\lambda^2 c]. \label{3.15}
\end{cases}
\eea
\bea
\begin{cases}
(\nabla_{\varphi e} S)(\xi,e) = 2\lambda^3 - f\lambda + c[e(\lambda) + 2\lambda c] -\varphi e(\varphi e(\lambda) + 2\lambda b),\\
(\nabla_\xi S)(\varphi e,e) = 0. \label{3.16}
\end{cases}
\eea
\bea
\begin{cases}
(\nabla_\xi S)(e,e) = -\xi(f),\\
(\nabla_e S)(\xi,e) = f - 2 - e(\varphi e(\lambda) + 2\lambda b) - b[e(\lambda) + 2\lambda c]. \label{3.17}
\end{cases}
\eea
\bea
\begin{cases}
(\nabla_\xi S)(e,\varphi e) = 0,\\
(\nabla_e S)(\xi,\varphi e) = 2\lambda^3 - f\lambda + b[\varphi e(\lambda) + 2\lambda b] - e(e(\lambda) + 2\lambda c). \label{3.18}
\end{cases}
\eea
We now complete the proof by substituting the equations \eqref{3.13}-\eqref{3.18}
in the equations \eqref{3.7}-\eqref{3.12} respectively.
\end{proof}

\begin{proposition} \label{prop3.2}
If the Reeb vector field of a non-Kenmotsu almost Kenmotsu 3-$h$-manifold $M^3$ is an eigen vector of the Ricci operator, then  $M^3$ is locally isometric to a non-unimodular Lie group equipped with a left invariant non-Kenmotsu almost Kenmotsu structure.
\end{proposition}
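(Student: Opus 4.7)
The plan is to show, on the open set $\mathcal{U}_1$ where $h\neq 0$, that $\lambda$ is a positive constant and that the smooth functions $b,c$ appearing in Lemma \ref{lem2.2} vanish identically. Once this is established, the Lie brackets (2.4) reduce to $[\xi,e]=-e+\lambda\varphi e$, $[\xi,\varphi e]=\lambda e-\varphi e$, $[e,\varphi e]=0$, which all have constant coefficients, so $\{\xi,e,\varphi e\}$ realises a left invariant orthonormal frame on the simply connected Lie group $G$ whose Lie algebra is defined by these brackets. The almost Kenmotsu 3-$h$-structure then descends as a left invariant structure on $G$, and the non-unimodularity of $G$ follows immediately from $\operatorname{tr}(\operatorname{ad}_{\xi})=-2\neq 0$.

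First I would extract from Lemma \ref{lem2.3} and the eigenvector hypothesis the two relations $e(\lambda)=-2\lambda c$ and $\varphi e(\lambda)=-2\lambda b$, obtained by setting the $e$- and $\varphi e$-components of $Q\xi$ equal to zero. Together with $\xi(\lambda)=0$ (recorded just before Lemma \ref{lem2.2}), these pin down the full gradient of $\lambda$ in terms of $\lambda,b,c$.

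The crux is then to compute $\xi(e(\lambda))$ in two different ways. Differentiating $e(\lambda)=-2\lambda c$ along $\xi$ gives $\xi(e(\lambda))=-2\lambda\xi(c)$, while the bracket $[\xi,e]=-e+\lambda\varphi e$ from (2.4) gives $\xi(e(\lambda))=e(\xi(\lambda))+[\xi,e](\lambda)=2\lambda c-2\lambda^{2}b$; equating yields $\xi(c)=-c+\lambda b$. The symmetric calculation using $\varphi e(\lambda)=-2\lambda b$ and $[\xi,\varphi e]=\lambda e-\varphi e$ produces $\xi(b)=-b+\lambda c$. Independently, applying the Jacobi identity to $(\xi,e,\varphi e)$ with the brackets (2.4), expanding via $[fX,Y]=f[X,Y]-Y(f)X$, and substituting the eigenvector relations delivers a second pair $\xi(b)=-b-\lambda c$, $\xi(c)=-c-\lambda b$. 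Comparing the two pairs forces $\lambda b=\lambda c=0$, so $b=c=0$ on $\mathcal{U}_1$, where $\lambda>0$; the eigenvector relations then collapse to $e(\lambda)=\varphi e(\lambda)=0$, and $\lambda$ is a positive constant.

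The step I expect to be the main obstacle is recognising that one must generate equations for $\xi(b)$ and $\xi(c)$ by two genuinely independent routes---the commutator action of $\xi$ on the eigenvector relations, and the Jacobi identity for the triple $(\xi,e,\varphi e)$---so that their discrepancy is what kills $b$ and $c$; either route by itself leaves the system underdetermined. After that observation the rest is mechanical: the brackets become constant-coefficient, giving the left invariant structure on a Lie group $G$, and $\operatorname{tr}(\operatorname{ad}_{\xi})=-2\neq 0$ identifies $G$ as non-unimodular.
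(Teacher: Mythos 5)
Your argument is correct, and it reaches the crucial conclusion $b=c=0$ by a genuinely different mechanism than the paper. The paper computes $e(r)$ and $\varphi e(r)$ twice: once from the contracted second Bianchi identity $\tfrac{1}{2}X(r)=(\operatorname{div}Q)X$, and once by differentiating the explicit formula $r=-2(\lambda^{2}+1)-2f$ of \eqref{2.5} using the eigenvector relations; the mismatch $8\lambda^{2}b=8\lambda^{2}c=0$ then forces $b=c=0$. You instead pit two expressions for $\xi(b)$ and $\xi(c)$ against each other: one obtained by $\xi$-differentiating the eigenvector relations through the commutators $[\xi,e]$ and $[\xi,\varphi e]$, the other from the Jacobi identity for $(\xi,e,\varphi e)$, which --- being automatic for genuine vector fields --- yields the unconditional identities $\xi(b)=-b+\lambda c+e(\lambda)$ and $\xi(c)=-c+\varphi e(\lambda)+\lambda b$ on $\mathcal{U}_1$; substituting the eigenvector relations into these and comparing with the first pair gives $2\lambda b=2\lambda c=0$. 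I have checked the Jacobi computation and it is right, and since the hypothesis holds on all of $\mathcal{U}_1$ you are entitled to differentiate it along $\xi$. Your route never leaves the level of the frame and its structure functions (it uses the Ricci tensor only through Lemma \ref{lem2.3} to state the hypothesis), whereas the paper's leans on $\operatorname{div}Q=\tfrac{1}{2}dr$ together with \eqref{2.5}; your explicit check of non-unimodularity via $\operatorname{tr}(\operatorname{ad}_{\xi})=-2$ is also more self-contained than the paper's bare citation of Milnor. The only item the paper extracts here that you do not mention is $f=2$, which is not needed for this proposition (and follows at once from $b=c=0$ and the formula for $f$ in Lemma \ref{lem2.3}) but is used later in Theorem \ref{thm3.6}.
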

\begin{proof}
Since $\xi$ is an eigen vector of $Q$, then Lemma \ref{lem2.3} implies
\bea
\begin{cases}
\varphi e(\lambda) + 2\lambda b = 0,\\
e(\lambda) + 2\lambda c = 0. \label{3.19}
\end{cases}
\eea
It is well known that $$\frac{1}{2}X(r) = (\operatorname{div}Q)X = \sum_{i=1}^3 g((\nabla_{e_i} Q)X,e_i).$$
From the preceding equation, we can write
\bea
\frac{1}{2}X(r) = (\nabla_\xi S)(X,\xi) + (\nabla_e S)(X,e) + (\nabla_{\varphi e} S)(X,\varphi e). \label{3.20}
\eea
Making use of \eqref{2.6}, \eqref{3.19} and $\xi(\lambda) = 0$, we obtain the followings:
\be
(\nabla_\xi S)(\xi,\xi) = 0,\; (\nabla_e S)(\xi,e) = f - 2,\; (\nabla_{\varphi e} S)(\xi,\varphi e) = f - 2, \label{3.21}
\ee
\be
(\nabla_\xi S)(e,\xi) = 0,\; (\nabla_e S)(e,e) = 4\lambda b - e(f),\; (\nabla_{\varphi e} S)(e,\varphi e) = - 4\lambda b, \label{3.22}
\ee
\be
(\nabla_\xi S)(\varphi e,\xi) = 0,\; (\nabla_e S)(\varphi e,e) = - 4\lambda c,\; (\nabla_{\varphi e} S)(\varphi e,\varphi e) = 4\lambda c - \varphi e(f).\label{3.23}
\ee
Now, substituting $X = \xi,\;e\;\mathrm{and}\;\varphi e$ in \eqref{3.20} and then using \eqref{3.21}, \eqref{3.22} and \eqref{3.23} respectively, we obtain
\bea
\xi(r) = 4(f - 2),\;\; e(r) = -2e(f),\;\; \varphi e(r) = -2\varphi e(f). \label{3.24}
\eea
Using \eqref{3.19} and $\xi(\lambda) = 0$, we get from \eqref{2.5}
\bea
\xi(r) = -2\xi(f),\; e(r) = -2e(f) + 8\lambda^2 c,\; \varphi e(r) = -2\varphi e(f) + 8\lambda^2 b. \label{3.25}
\eea
Since $\lambda$ is a positive function, then the second and third equations of \eqref{3.24} and \eqref{3.25} implies $b = c = 0$. From Lemma \ref{lem2.3}, we get $f = 2$. Also from \eqref{3.19}, we get $e(\lambda) = \varphi e(\lambda) = 0$ and therefore $\lambda$ is a constant. Now, the Lie brackets given in \eqref{2.4} reduces to
$$[e,\xi] = e - \lambda \varphi e,\;\; [e,\varphi e] = 0\;\; \mathrm{and}\;\; [\varphi e,\xi] = - \lambda e + \varphi e.$$
Therefore, according to Milnor \cite{milnor}, $M^3$ is locally isometric to a non-unimodular Lie group equipped with a left invariant non-Kenmotsu almost Kenmotsu structure.
\end{proof}
Combining Lemma \ref{lem3.1} and Proposition  \ref{prop3.2}, the components of the Cotton tensor can be written as given in the follwing Corollary.
\begin{corollary} \label{cor3.3}
If the Reeb vector field of a non-Kenmotsu almost Kenmotsu 3-$h$-manifold $M^3$ is an eigen vector of the Ricci operator, then the components of the $(0,2)$-Cotton tensor $C$ with respect to an orthonormal frame $\{\xi,e,\varphi e\}$ on $M^3$ can be expressed as follows:
$$ C_{11} = C(\xi,\xi) = 0,\;  C_{12} = C(\xi,e) = 0,\;  C_{13} = C(\xi,\varphi e) = 0,$$
$$ C_{22} = C(e,e) = 2\lambda^3 - 2\lambda,\; C_{23} = C(e,\varphi e) = 0,\; C_{33} = C(\varphi e,\varphi e) = - 2\lambda^3 + 2\lambda.$$
\end{corollary}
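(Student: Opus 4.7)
The plan is to treat Corollary \ref{cor3.3} as a direct substitution exercise: the hypotheses of Proposition \ref{prop3.2} already pin down all the auxiliary functions appearing in the formulas of Lemma \ref{lem3.1}, so the corollary should follow by just plugging in. Concretely, I would first harvest from the proof of Proposition \ref{prop3.2} the structural constraints that are forced by the eigenvector assumption together with $\nabla_\xi h = 0$: namely $b = c = 0$, the function $\lambda$ is a (positive) constant so that $\xi(\lambda) = e(\lambda) = \varphi e(\lambda) = 0$, and consequently $f = e(c) + \varphi e(b) + b^2 + c^2 + 2 = 2$. From \eqref{2.5} these in turn give that the scalar curvature $r = -2(\lambda^2 + 1) - 4$ is constant, hence $\xi(r) = e(r) = \varphi e(r) = 0$.

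Next I would substitute these vanishings into the six expressions of Lemma \ref{lem3.1} one by one. The quantities $\varphi e(\lambda) + 2\lambda b$ and $e(\lambda) + 2\lambda c$ are each zero by \eqref{3.19}, so every term of \eqref{3.1} vanishes and $C_{11} = 0$. In \eqref{3.2} and \eqref{3.3} the bracketed combinations $e(\lambda) - 3\lambda \varphi e(\lambda) + 2\lambda c - 2\lambda^2 b$ and its companion vanish termwise, the $\xi$-derivatives of $e(\lambda)+2\lambda c$ and $\varphi e(\lambda)+2\lambda b$ vanish as derivatives of zero, and $\varphi e(r) = e(r) = 0$; hence $C_{12} = C_{13} = 0$. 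For \eqref{3.4} only the first two terms survive and, since $f = 2$, we obtain $C_{22} = 2\lambda^3 - 2\lambda$. For \eqref{3.5}, $\xi(f) = 0$, $f = 2$, $\xi(r) = 0$, and the remaining terms each involve a zero factor, so $C_{23} = -2 + 2 = 0$. Finally \eqref{3.6} collapses to $C_{33} = -2\lambda^3 + f\lambda = -2\lambda^3 + 2\lambda$.

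There is essentially no obstacle here: the combinatorics of which term of each formula in Lemma \ref{lem3.1} vanishes is routine once one is careful to recognize that $\xi$-derivatives of expressions already known to vanish (like $e(\lambda) + 2\lambda c$) are themselves zero. The only mild bookkeeping issue is making sure, for $C_{23}$, that the constant term $-f + 2$ cancels; this is where the value $f = 2$ derived in Proposition \ref{prop3.2} is genuinely used. The resulting list of components then matches the statement of the corollary.
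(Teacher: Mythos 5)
Your proposal is correct and is exactly the route the paper takes: the paper states the corollary as an immediate consequence of combining Lemma \ref{lem3.1} with Proposition \ref{prop3.2}, i.e.\ substituting $b=c=0$, $f=2$, $\lambda$ constant and $r$ constant into the six component formulas. Your term-by-term verification (including the cancellation $-f+2=0$ in $C_{23}$) fills in precisely the routine bookkeeping the paper leaves implicit.
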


We first consider the Cotton soliton with potential vector field $V$  pointwise collinear with the Reeb vector field. In this regard, we prove the following non-existing result.

\begin{theorem} \label{thm3.4}
On a non-Kenmotsu almost Kenmotsu 3-$h$-manifold such that the Reeb vector field is an eigen vector of the Ricci operator, there exist no Cotton soliton with potential vector field pointwise collinear with the Reeb vector field.
\end{theorem}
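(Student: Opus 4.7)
The strategy is a direct expansion of the soliton equation \eqref{1.4} with $V = \rho\xi$ in the local orthonormal frame $\{\xi,e,\varphi e\}$. Since $\xi$ is an eigenvector of $Q$, Proposition \ref{prop3.2} applies, giving $b = c = 0$ and $\lambda$ a positive constant, and the Cotton tensor is controlled by the simple formulas of Corollary \ref{cor3.3}. The plan is to reach a contradiction from the single off-diagonal component of \eqref{1.4} along $(e,\varphi e)$.

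The principal computation is that of $\mathcal{L}_V g$. Writing $\nabla_X V = X(\rho)\xi + \rho(X - \eta(X)\xi - \varphi hX)$ via \eqref{2.2}, the identity $(\mathcal{L}_V g)(X,Y) = g(\nabla_X V, Y) + g(\nabla_Y V, X)$, together with $he = \lambda e$ and $h\varphi e = -\lambda\varphi e$, reduces all six components to short expressions in $\rho$, its frame derivatives, and $\lambda$. The crucial one is $(\mathcal{L}_V g)(e,\varphi e) = -2\rho\lambda$, produced entirely by the $-\rho\varphi hX$ term in $\nabla_X V$. Since Corollary \ref{cor3.3} gives $C(e,\varphi e) = 0$ and $g(e,\varphi e) = 0$, the $(e,\varphi e)$ component of \eqref{1.4} collapses to $\rho\lambda = 0$. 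Because $\lambda$ is a positive constant on $\mathcal{U}_1$, this forces $\rho \equiv 0$ there, contradicting the nontriviality of the potential vector field.

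I expect the main obstacle to be simply getting the frame calculation right, especially the sign and symmetry of the $\varphi h$ term, since this is exactly what produces the decisive $-2\rho\lambda$. The remaining five scalar equations coming from \eqref{1.4} need not be examined; if one did pursue them, they would yield $\sigma = 0$ together with the redundant constraint $\lambda^2 = 1$, but the $(e,\varphi e)$ relation alone suffices to close the argument.
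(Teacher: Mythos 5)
Your proposal is correct and follows essentially the same route as the paper's own proof: both isolate the $(e,\varphi e)$ component of \eqref{1.4}, where $C(e,\varphi e)=0$ and $g(e,\varphi e)=0$ by Corollary \ref{cor3.3}, while the Lie derivative term contributes $-2\rho\lambda$ via $\nabla_e\xi$ and $\nabla_{\varphi e}\xi$, forcing $\rho\lambda=0$ and hence a contradiction since $\lambda>0$ on $\mathcal{U}_1$. The frame computation of the decisive $-2\rho\lambda$ term is accurate, so nothing further is needed.
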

\begin{proof}
Suppose that the potential vector field $V$ is pointwise collinear with the Reeb vector field $\xi$. Then there exist a non-zero smooth function $\alpha$ on $M^3$ such that $V = \alpha \xi$. Now, substituting $X = e$ and $Y = \varphi e$ in \eqref{1.4} and using Lemma \ref{lem2.2} and Corollary \ref{cor3.3}, we get $2\lambda \alpha = 0$. This gives either $\lambda = 0$ or $\alpha = 0$. In either cases, we get a contradiction. This completes the proof.
\end{proof}

From Theorem \ref{thm3.4} and Proposition \ref{prop3.2}, we can say the following:

\begin{corollary} \label{cor3.5}
On a 3-dimensional non-unimodular Lie group equipped with a left invariant non-Kenmotsu almost Kenmotsu structure, there exist no Cotton soliton with potential vector field pointwise collinear with the Reeb vector field.
\end{corollary}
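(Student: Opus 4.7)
The plan is to deduce Corollary \ref{cor3.5} from Theorem \ref{thm3.4} by verifying that on any 3-dimensional non-unimodular Lie group equipped with a left invariant non-Kenmotsu almost Kenmotsu structure, the Reeb vector field is automatically an eigenvector of the Ricci operator. Once this observation is in hand, Theorem \ref{thm3.4} applies verbatim and yields the desired non-existence conclusion.

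First I would invoke the explicit structure isolated in the proof of Proposition \ref{prop3.2}: on such a Lie group, one has a left invariant orthonormal frame $\{\xi,e,\varphi e\}$ whose brackets take the reduced form
$$[e,\xi] = e - \lambda\varphi e,\qquad [e,\varphi e] = 0,\qquad [\varphi e,\xi] = -\lambda e + \varphi e,$$
with $\lambda$ a positive constant and the structure functions $b$ and $c$ of Lemma \ref{lem2.2} identically zero. In particular, left invariance forces $\xi(\lambda) = e(\lambda) = \varphi e(\lambda) = 0$.

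Next, I would substitute $b = c = 0$ and $e(\lambda) = \varphi e(\lambda) = 0$ into the expression for $Q\xi$ supplied by Lemma \ref{lem2.3}. The off-diagonal contributions $\varphi e(\lambda) + 2\lambda b$ and $e(\lambda) + 2\lambda c$ both vanish, leaving
$$Q\xi = -2(\lambda^2 + 1)\,\xi,$$
which exhibits $\xi$ as an eigenvector of the Ricci operator. All hypotheses of Theorem \ref{thm3.4} are thereby met, and the theorem immediately rules out a Cotton soliton with potential vector field pointwise collinear with $\xi$.

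The only mildly subtle point is the initial identification of an arbitrary left invariant non-Kenmotsu almost Kenmotsu structure on a non-unimodular Lie group with the special bracket form produced in Proposition \ref{prop3.2}; everything after that is a direct substitution into previously established formulas. Consequently I do not expect any genuine obstacle, and the corollary is essentially a translation of Theorem \ref{thm3.4} into Lie-group language made available by the rigidity captured in Proposition \ref{prop3.2}.
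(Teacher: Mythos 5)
Your overall route is the same as the paper's: the corollary is obtained there simply by combining Theorem \ref{thm3.4} with Proposition \ref{prop3.2}, and you correctly identify that the missing link is the converse of Proposition \ref{prop3.2}, namely that on such a Lie group the Reeb vector field is automatically an eigenvector of $Q$. The trouble is that your justification of that link is circular. The reduced bracket form with $b=c=0$ that you invoke is \emph{derived} in the proof of Proposition \ref{prop3.2} \emph{from} the hypothesis that $\xi$ is an eigenvector of $Q$: that hypothesis gives \eqref{3.19}, which together with the contracted Bianchi identity forces $b=c=0$. You cannot import that conclusion in order to establish the eigenvector condition on an arbitrary left invariant non-Kenmotsu almost Kenmotsu structure; left invariance by itself only tells you that $\lambda$, $b$, $c$ are constants, and by Lemma \ref{lem2.3} the eigenvector condition is then precisely $b=c=0$, which still has to be proved.

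A non-circular argument would apply the Jacobi identity to the constant-coefficient brackets \eqref{2.4}; a short computation gives $b=\lambda c$ and $c=\lambda b$, hence $b(\lambda^{2}-1)=c(\lambda^{2}-1)=0$. This settles the case $\lambda\neq 1$, but for $\lambda=1$ it permits $b=c\neq 0$, a left invariant structure for which $Q\xi=-4\xi-2be-2c\varphi e$ is not proportional to $\xi$, so Theorem \ref{thm3.4} would not apply. To close the corollary as stated one needs either the classification of left invariant almost Kenmotsu structures on non-unimodular Lie groups from \cite{wa2} (Example 2.5 in the paper only asserts existence of such a structure on groups with $[e_2,e_3]=0$, not that every left invariant structure has this form) or a separate treatment of the case $\lambda=1$, $b=c\neq 0$. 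In fairness, the paper's own one-line derivation glosses over exactly the same point; but since you explicitly single this step out and declare it unproblematic, it deserves to be flagged as the genuine gap in the argument.
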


It is now quite tempting to consider the potential vector field $V$ as orthogonal to the Reeb vector field. In this setting, we prove the following:

\begin{theorem} \label{thm3.6}
Let $(M^3,g)$ be a non-Kenmotsu almost Kenmotsu 3-$h$-manifold such that the Reeb vector field is an eigen vector of the Ricci operator. If $g$ is a Cotton soliton with potential vector field orthogonal to the Reeb vector field, then $M^3$ is locally isometric to $\mathbb{H}^2(-4) \times \mathbb{R}$ and the Cotton soliton is steady. 
\end{theorem}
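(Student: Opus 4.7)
The plan is to parametrize $V$ in the orthogonal complement of $\xi$, plug into the Cotton soliton equation using the specialized connection data, and then use the non-coordinate Lie bracket relations to pin down $\lambda$. Since $V \perp \xi$, I write $V = \alpha e + \beta \varphi e$ for smooth functions $\alpha, \beta$ on $M^3$. The hypotheses of the theorem are exactly those of Proposition \ref{prop3.2}, so in this setting $b = c = 0$, $\lambda$ is constant, and $f = 2$. Using Lemma \ref{lem2.2} with these specializations, I compute $\nabla_X V$ for $X \in \{\xi, e, \varphi e\}$ and then $(\mathcal{L}_V g)(X, Y) = g(\nabla_X V, Y) + g(\nabla_Y V, X)$ for every pair of basis vectors. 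Substituting into \eqref{1.4} together with the components of $C$ from Corollary \ref{cor3.3} produces six scalar equations. The $(\xi,\xi)$ component immediately forces $\sigma = 0$, so the soliton is steady. The remaining five collapse to $\xi(\alpha) = \alpha - \lambda \beta$, $\xi(\beta) = \beta - \lambda \alpha$, $e(\alpha) = \lambda - \lambda^3$, $\varphi e(\beta) = \lambda^3 - \lambda$, and $e(\beta) + \varphi e(\alpha) = 0$.

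To extract further information I exploit the Lie bracket relation $[e, \xi] = e - \lambda \varphi e$ from \eqref{2.4}, which as a derivation on functions must agree with $e \circ \xi - \xi \circ e$. Applying this identity to $\alpha$ and using $\xi(\lambda) = 0$ together with the soliton relations above, both sides simplify so that, after cancellation and division by $\lambda \neq 0$, I obtain $e(\beta) = \varphi e(\alpha)$. Combined with $e(\beta) + \varphi e(\alpha) = 0$, this forces $e(\beta) = \varphi e(\alpha) = 0$. Running the same consistency test on $\beta$ and substituting the now-known vanishing partial derivatives yields the purely algebraic constraint $\lambda^4 = \lambda^2$; since $\lambda$ is a positive constant, $\lambda = 1$. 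I expect this commutator step, rather than the preceding bookkeeping, to be the main obstacle, because the first-order relations by themselves leave $\lambda$ a free parameter.

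With $\lambda = 1$ and $b = c = 0$ every connection coefficient in Lemma \ref{lem2.2} is fixed, and Lemma \ref{lem2.3} gives $Q\xi = -4\xi$, $Qe = -2e + 2\varphi e$, $Q\varphi e = 2e - 2\varphi e$, whose eigenvalues are $-4, -4, 0$. A direct check using the connection formulas shows that $U := \frac{1}{\sqrt{2}}(e + \varphi e)$ satisfies $\nabla_X U = 0$ for every $X$, i.e.\ $U$ is a globally parallel unit vector field. By the de Rham decomposition theorem $M^3$ splits locally as a Riemannian product along $U$, and the orthogonal distribution $\mathrm{span}\{\xi, e - \varphi e\}$ is integrable since \eqref{2.4} yields $[e - \varphi e, \xi] = 2(e - \varphi e)$. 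A sectional curvature computation in this $2$-plane, using Lemma \ref{lem2.2} with $\lambda = 1$, returns the constant value $-4$, so the two-dimensional factor is locally $\mathbb{H}^2(-4)$. Therefore $M^3$ is locally isometric to $\mathbb{H}^2(-4) \times \mathbb{R}$, and combined with $\sigma = 0$ this completes the proof.
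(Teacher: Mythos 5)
Your proposal is correct, and for the first two-thirds it coincides with the paper's own argument: the same six component equations (your $\xi(\alpha)=\alpha-\lambda\beta$, $e(\alpha)=\lambda-\lambda^3$, etc.\ are exactly the paper's \eqref{3.27}--\eqref{3.31}), the same immediate deduction $\sigma=0$ from the $(\xi,\xi)$ component, and the same key move of testing the commutator identity $[e,\xi]=e-\lambda\varphi e$ on $\alpha$ and then on $\beta$ to first get $e(\beta)=\varphi e(\alpha)=0$ and then the algebraic constraint forcing $\lambda=1$ (the paper gets $\lambda-\lambda^3=\lambda^3-\lambda$, i.e.\ $\lambda^2=1$, rather than your $\lambda^4=\lambda^2$, but the conclusion is identical for positive $\lambda$). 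You diverge only at the final step. The paper observes that with $\lambda=1$, $b=c=0$, $f=2$ one has $\nabla Q=0$, hence local symmetry, and then quotes Theorem 5 of Cho (\cite{cho1}) to identify the manifold as $\mathbb{H}^2(-4)\times\mathbb{R}$. You instead verify directly from Lemma \ref{lem2.2} that $U=\tfrac{1}{\sqrt{2}}(e+\varphi e)$ is parallel (indeed $\nabla_e e=-\xi$, $\nabla_e\varphi e=\xi$, etc., all cancel), invoke the de Rham splitting along $U$, and compute the sectional curvature of the complementary plane $\mathrm{span}\{\xi,e-\varphi e\}$ to be $-4$; since a leaf of a parallel distribution is totally geodesic, its Gauss curvature equals this ambient sectional curvature and the two-dimensional factor is locally $\mathbb{H}^2(-4)$. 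Your route is self-contained and makes the geometric reason for the splitting transparent (the $0$-eigenvector $e+\varphi e$ of $Q$ is the flat $\mathbb{R}$-direction), at the cost of a curvature computation; the paper's route is shorter but outsources the identification to an external classification theorem. Both are sound.
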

\begin{proof}
For a non-Kenmotsu almost Kenmotsu 3-$h$-manifold such that the Reeb vector field is an eigen vector of the Ricci operator, Proposition \ref{prop3.2} gives  $b = c = 0,\; f = 2,\; \lambda = \mathrm{constant}\; \mathrm{and}\; r = \mathrm{constant}$. Since $V$ is orthogonal to $\xi$, then there exist two smooth functions $\alpha_1$ and $\alpha_2$ on $M^3$ such that $V = \alpha_1 e + \alpha_2 \varphi e$. With the help of Lemma \ref{lem2.2}, we now obtain the components of $\mathcal{L}_V g$ as follows:
\bea
\begin{cases}
(\mathcal{L}_V g)(\xi,\xi) = 0,\; (\mathcal{L}_V g)(\xi,e) = \xi(\alpha_1) - \alpha_1 + \lambda \alpha_2,\\
(\mathcal{L}_V g)(\xi,\varphi e) = \xi(\alpha_2) - \alpha_2 + \lambda \alpha_1,\; (\mathcal{L}_V g)(e,e) = 2e(\alpha_1),\\
(\mathcal{L}_V g)(e,\varphi e) = e(\alpha_2) + \varphi e(\alpha_1),\; (\mathcal{L}_V g)(\varphi e,\varphi e) = 2\varphi e(\alpha_2). \label{3.26}
\end{cases}
\eea
We now use Corollary \ref{cor3.3} and the set of equations \eqref{3.26}. Substituting $X = Y = \xi$ in \eqref{1.4}, we get $\sigma = 0$. This shows that the Cotton soliton is steady. Now, substitution of $X = \xi$, $Y = e$ in \eqref{1.4} yields
\bea
\xi(\alpha_1) - \alpha_1 + \lambda \alpha_2 = 0. \label{3.27}
\eea
Replacing $X$ by $\xi$ and $Y$ by $\varphi e$ in \eqref{1.4}, we get
\bea
\xi(\alpha_2) - \alpha_2 + \lambda \alpha_1 = 0. \label{3.28}
\eea
Putting $X = Y = e$ in \eqref{1.4}, we obtain
\bea
2e(\alpha_1) + 2\lambda^3 - 2\lambda = 0. \label{3.29}
\eea
Substitution of $X = e$ and $Y = \varphi e$ in \eqref{1.4} yields
\bea
e(\alpha_2) + \varphi e(\alpha_1) = 0. \label{3.30}
\eea
Putting $X = Y = \varphi e$ in \eqref{1.4}, we infer
\bea
2\varphi e(\alpha_2) - 2\lambda^3 + 2\lambda = 0. \label{3.31}
\eea
Since $b = c = 0$, the Lie brackets given in \eqref{2.4} reduces to
\bea
[e,\xi] = e - \lambda \varphi e,\;\; [e,\varphi e] = 0\;\; \mathrm{and}\;\; [\varphi e,\xi] = - \lambda e + \varphi e. \label{3.32}
\eea
Since $\lambda$ is a positive constant, then from \eqref{3.27} and \eqref{3.29}, we obtain $$e(\xi(\alpha_1)) = e(\alpha_1) - \lambda e(\alpha_2)\;\; \mathrm{and}\;\; \xi(e(\alpha_1)) = 0.$$
Applying the first Lie bracket of \eqref{3.32} in the preceding eqaution, we get $\varphi e(\alpha_1) = e(\alpha_2)$. Hence, equation \eqref{3.30} implies $\varphi e(\alpha_1) = e(\alpha_2) = 0$. Now, from \eqref{3.28}, we get $e(\xi(\alpha_2)) = - \lambda e(\alpha_1)$. Also, we have $\xi(e(\alpha_2)) = 0$. Again, using these two in the first Lie bracket of \eqref{3.32} yields $\varphi e(\alpha_2) = e(\alpha_1)$. Applying \eqref{3.29} and \eqref{3.31} in the preceding relation and using the fact that $\lambda$ is a positive function, we obtain $\lambda = 1$. Now, it is easy to check that $\nabla Q = 0$. Notice that, a Riemannian 3-manifold is Ricci parallel if and only if it is locally symmetric. The rest of the proof follows from Theorem 5 of \cite{cho1} which says that ``A  non-Kenmotsu almost Kenmotsu 3-manifold $M^3$ is locally symmetric if and only if $M^3$ is locally isometric to the product space $\mathbb{H}^2(-4) \times \mathbb{R}$".
\end{proof}

As a combination of Proposition \ref{prop3.2} and Theorem \ref{thm3.6}, we have the following:
\begin{corollary}
If $g$ is a Cotton soliton with potential vector field orthogonal to the Reeb vector field on a 3-dimensional non-unimodular Lie group $M^3$ equipped with a left invariant non-Kenmotsu almost Kenmotsu structure, then $M^3$ is locally isometric to $\mathbb{H}^2(-4) \times \mathbb{R}$ and the Cotton soliton is steady. 
\end{corollary}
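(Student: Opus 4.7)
The hypothesis together with Proposition \ref{prop3.2} immediately forces $b = c = 0$, $f = 2$, and $\lambda$ to be a positive constant; by Corollary \ref{cor3.3} the only nontrivial components of the Cotton tensor in the frame $\{\xi, e, \varphi e\}$ are $C_{22} = 2\lambda^3 - 2\lambda$ and $C_{33} = -C_{22}$. My plan is to write $V = \alpha_1 e + \alpha_2 \varphi e$, expand the Cotton soliton equation \eqref{1.4} in this frame to obtain a system on $\alpha_1, \alpha_2, \lambda$, pin down $\sigma = 0$ and $\lambda = 1$, and then invoke a local-symmetry classification.

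First, using Lemma \ref{lem2.2} with $b = c = 0$, I would compute the six components of $\mathcal{L}_V g$; in particular the $(\xi,\xi)$-component vanishes, so testing \eqref{1.4} on $(\xi,\xi)$ gives $\sigma = 0$ and steadiness is immediate. The $(e,e)$ and $(\varphi e, \varphi e)$ components then produce algebraic expressions for $e(\alpha_1)$ and $\varphi e(\alpha_2)$ in terms of $\lambda$, the $(e,\varphi e)$-component gives $e(\alpha_2) + \varphi e(\alpha_1) = 0$, and the $(\xi, e)$ and $(\xi, \varphi e)$ components provide two first-order equations involving $\xi(\alpha_i)$, $\alpha_i$ and $\lambda$.

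The main obstacle is extracting $\lambda = 1$ from this system. The key tool is the simplified Lie bracket $[e,\xi] = e - \lambda \varphi e$ that follows from \eqref{2.4} once $b = c = 0$. Since $\lambda$ is constant, $e(\alpha_1)$ is a constant function, so $\xi(e(\alpha_1)) = 0$; applying $e$ to the $(\xi,e)$-equation and inserting the commutator identity $e(\xi \alpha_1) - \xi(e \alpha_1) = [e,\xi](\alpha_1)$ collapses the result to $\varphi e(\alpha_1) = e(\alpha_2)$, which combined with the $(e,\varphi e)$-equation gives $\varphi e(\alpha_1) = e(\alpha_2) = 0$. The same maneuver applied to the $(\xi, \varphi e)$-equation yields $\varphi e(\alpha_2) = e(\alpha_1)$; comparing the two sides against the expressions $e(\alpha_1) = \lambda - \lambda^3$ and $\varphi e(\alpha_2) = \lambda^3 - \lambda$ forces $\lambda^3 = \lambda$, hence $\lambda = 1$ by positivity. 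The delicate point here is that all five nontrivial components of \eqref{1.4} are needed and the commutator must be applied in the right order to avoid circularities.

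Finally, with $\lambda = 1$, $b = c = 0$, $f = 2$, Lemma \ref{lem2.3} reduces $Q$ to constant coefficients in the frame $\{\xi, e, \varphi e\}$, and a direct check against Lemma \ref{lem2.2} verifies $\nabla Q = 0$. Since a Riemannian 3-manifold is Ricci-parallel if and only if it is locally symmetric, Theorem 5 of \cite{cho1} identifies $M^3$ as locally isometric to $\mathbb{H}^2(-4) \times \mathbb{R}$, finishing the argument.
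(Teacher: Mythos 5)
Your proposal is correct and follows essentially the same route as the paper: the paper obtains this corollary by combining Proposition \ref{prop3.2} with Theorem \ref{thm3.6}, and your argument is precisely the paper's proof of Theorem \ref{thm3.6} (steadiness from the $(\xi,\xi)$-component, the commutator trick with $[e,\xi]=e-\lambda\varphi e$ to get $\varphi e(\alpha_1)=e(\alpha_2)=0$ and $\varphi e(\alpha_2)=e(\alpha_1)$, hence $\lambda=1$, then $\nabla Q=0$ and Cho's classification) specialized to the Lie group setting. No gaps beyond those already present in the paper's own treatment.
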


\subsection*{Acknowledgement} The author Dibakar Dey is thankful to the Council of Scientific and Industrial Research, India (File no: 09/028(1010)/2017-EMR-1) for their assistance.

\end{document}